\DeclareMathOperator{\diff}{\mathrm{d} \! }
\newtheorem{theorem}{Theorem}
\newtheorem{corollary}{Corollary}
\begin{document}
	
\title{Full linear multistep methods as root-finders}

\author[tue]{Bart S. van Lith\fnref{fn1}}
\ead{b.s.v.lith@tue.nl}
\author[tue]{Jan H.M. ten Thije Boonkkamp}
\author[tue,phl]{Wilbert L. IJzerman}

\fntext[fn1]{Corresponding author}

\address[tue]{Department of Mathematics and Computer Science, Eindhoven University of Technology - P. O. Box 513, NL-5600 MB Eindhoven, The Netherlands.}
\address[phl]{Philips Lighting - High Tech Campus 44, 5656 AE, Eindhoven, The Netherlands.}

\begin{abstract}
Root-finders based on full linear multistep methods (LMMs) use previous function values, derivatives and root estimates to iteratively find a root of a nonlinear function. As ODE solvers, full LMMs are typically not zero-stable. However, used as root-finders, the interpolation points are convergent so that such stability issues are circumvented.  A general analysis is provided based on inverse polynomial interpolation, which is used to prove a fundamental barrier on the convergence rate of any LMM-based method. We show, using numerical examples, that full LMM-based methods perform excellently. Finally, we also provide a robust implementation based on Brent's method that is guaranteed to converge.
\end{abstract}

\begin{keyword}Root-finder \sep nonlinear equation \sep linear multistep methods \sep iterative methods  \sep convergence rate.
\end{keyword}

\maketitle
	
\section{Introduction}

Suppose we are given a sufficiently smooth nonlinear function $f: \mathbb{R} \to \mathbb{R}$ and we are asked to solve the equation
\begin{equation}\label{eq:root_finding_problem}
f(x) = 0.
\end{equation}
This archetypical problem is ubiquitous in all fields of mathematics, science and engineering. For example, ray tracing techniques in optics and computer graphics need to accurately calculate intersection points between straight lines, rays, and objects of varying shapes and sizes \cite{glassner,chaves}. Implicit ODE solvers are often formulated like \eqref{eq:root_finding_problem}, after which a root-finder of some kind is applied \cite{butcher}.\par
Depending on the properties of the function $f$, there are several methods that present themselves. Sometimes the derivative is not available for various reasons, in which case the secant method will prove useful. If higher-order convergence is desired, inverse quadratic interpolation may be used \cite{gautschi}. If the derivative of $f$ exists and is available, Newton's method is a solid choice, especially if $f$ is also convex. \par
Recently, a new interpretation of root-finding methods in terms of ODEs has been introduce by Grau-S\'anchez et al. \cite{grau_adams,grau_RK,grau_obreshkov}. Their idea is to consider the inverse function derivative rule as an ODE, so that any explicit ODE solver may be converted to a root-finding method. Indeed, Grau-S\'anchez et al. have successfully introduced root-finders based on Adams-type multistep and Runge-Kutta integrators. It goes without saying that only explicit ODE solvers can be usefully converted to root-finding methods. However, predictor-corrector pairs are possible, as those methods are indeed explicit.\par
We argue that the ODE approach can be interpreted as inverse interpolation with (higher) derivatives. Indeed, any linear integration method is based on polynomial interpolation. Thus, the ODE approach can be seen as a generalisation of inverse interpolation methods such as the secant method or inverse quadratic interpolation. The analysis can thus be combined into a single approach based on inverse polynomial interpolation.\par
Our main theoretical result is a theorem on the convergence rate of root-finders based on explicit linear multistep methods. We furthermore prove a barrier on the convergence rate of LMM-based root-finders. It turns out that adding a few history points quickly boosts the convergence rate close to the theoretical bound. However, adding many history points ultimately proves an exercise in futility due to diminishing returns in the convergence rate. Two LMM-based methods are constructed explicitly, one using two history points with a convergence rate of $1+\sqrt{3} \approx 2.73$ and another with three history points that converges with rate $2.91$.\par
In terms of the efficiency measure, defined as $p^\frac{1}{w}$ with $p$ the order and $w$ the number of evaluations \cite{gautschi}, our method holds up even compared to several optimal memoryless root-finders. The Kung-Traub conjecture famously states the optimal order of memoryless root-finders using $w$ evaluations is $2^{w-1}$, so that the efficiency is $2^\frac{w-1}{w}$. Our method requires two evaluations per iteration, provided function and derivative values are stored, leading to an efficiency of $1.74$ for the three-point method. As a consequence, anything up to an eighth-order method has a lower efficiency measure. For instance, optimal fourth-order methods with $\sqrt[3]{4} \approx 1.59$ \cite{Shengguo20091288,SOLEYMANI2012847,BEHL201589}, or eighth-order methods with $\sqrt[4]{8} \approx 1.68$ \cite{kim,CHUN201486,Lotfi2015}. Only 16th-order methods, e.g. Geum and Kim's \cite{GEUM20113278}, with $\sqrt[5]{16} \approx 1.74$ would in theory be more efficient.\par
Using several numerical examples, we show that the LMM-based methods indeed achieve this higher convergence rate. Furthermore, pathological examples where Newton's method fails to converge are used to show increased stability. We also construct a robust LMM-based method combined with bisection to produce a method that can been seen as an extension of Brent's \cite{brent}. Similar to Brent's method, whenever an enclosing starting bracket is provided, an interval $[a,b]$ with $f(a) f(b) <0$, our method is guaranteed to converge.\par
This article is organised in the following way. First, we find the convergence rate of a wide class of root-finders in Section~\ref{sec:barriers} and prove a barrier on the convergence rates. Next, in Section~\ref{sec:new_finders} we derive new root-finders based on full linear multistep methods and show that such methods are stable when the initial guess is sufficiently close to the root. After this, some results are presented in Section~\ref{sec:results} that verify our earlier theoretical treatment. Finally, we present our robust implementation in Section~\ref{sec:robust}, after which we give our conclusions in Section~\ref{sec:conclusions}.

\section{Barriers on LMM root-finders}\label{sec:barriers}
Root-finding methods based on the ODE approach of Grau-S\'anchez et al. can be derived by assuming that the function $f$ is sufficiently smooth and invertible in the vicinity of the root. Under these assumptions, the chain rule gives
\begin{equation}\label{eq:root_ODE}
\frac{\diff x}{\diff y} = [f^{-1}]^\prime (y) = \frac{1}{f^\prime\big(x\big)} = F(x),
\end{equation}
which we may interpret as an autonomous ODE for the inverse. Integrating \eqref{eq:root_ODE} from an initial guess $y_0 = f(x_0)$ to $y=0$ yields
\begin{equation}\label{eq:root_integral}
x(0) = x_0 + \int_{y_0}^{0} F\big( x(y) \big) \diff y,
\end{equation}
where $x(0)$ is the location of the root. Immediately, we see that applying the forward Euler method to \eqref{eq:root_ODE} gives Newton's method. From \eqref{eq:root_integral}, we see that the step size of the integrator should be taken as $0-y_0 = -f(x_0)$. However, Newton's method may also be interpreted as an inverse linear Taylor method, i.e., a method where the inverse function is approximated by a first-order Taylor polynomial. Indeed, any linear numerical integration method applied to \eqref{eq:root_ODE} can be interpreted as an inverse polynomial interpolation.\par
As such, explicit linear multistep methods applied to \eqref{eq:root_ODE} will also produce a polynomial approximation to the inverse function. Such a method has the form
\begin{equation}\label{eq:LMM_root-finder}
x_{n+s} + \sum_{k=0}^{s-1} a_k^{(n)} x_{n+k} = h_{n+s} \sum_{k=0}^{s-1} b_k^{(n)} F(x_{n+k}),
\end{equation}
where indeed $b_s^{(n)} = 0$, otherwise we end up with an implicit root-finder, which would not be very useful. The coefficients of the method, $\{a_k^{(n)}\}_{k=0}^{s-1}$ and $\{b_k^{(n)}\}_{k=0}^{s-1}$, will depend on the previous step sizes and will therefore be different each step. The step sizes are given by $h_{n+k} = y_{n+k} - y_{n+k-1}$, the differences in $y$-values. Since we wish to find the root, we set $y_{n+s} = 0$, leading to $h_{n+s} = y_{n+s} - y_{n+s-1} = - y_{n+s-1} $. Furthermore, the $y$-values are of course given by the function values of the root estimates, i.e.,
\begin{equation}
h_{n+k} =  f(x_{n+k}) -f(x_{n+k-1}) \quad  \text{for }k=1,\ldots,s-1.
\end{equation}
Like an ODE solver, we may use an implicit LMM in tandem with an explicit LMM to form a predictor-corrector pair, the whole forming an explicit method. Unlike an ODE solver, we may construct derivative-free root-finders based on the LMM approach by setting all $b_k^{(n)}=0$ for $k=0,\ldots s-1$ and for all $n>0$, e.g., the secant method. For an ODE solver this would obviously not make sense. Similar to ODE solvers, we may introduce higher derivatives by using
\begin{equation}\label{eq:general_LMM_root-finder}
x_{n+s} + \sum_{k=0}^{s-1} a_k^{(n)} x_{n+k} = h_{n+s} \sum_{k=0}^{s-1} b_k^{(n)} F(x_{n+k}) + h_{n+s}^2 \sum_{k=0}^{s-1} c_k^{(n)} F^\prime(x_{n+k})  + \ldots 
\end{equation}
The following theorem provides the maximal convergence rate for any method of the form \eqref{eq:general_LMM_root-finder}. Furthermore, it provides a fundamental barrier on the convergence rates of LMM-based root-finders. Under certain conditions, it also rewards our intuition in the sense that methods using more information converge faster to the root.\par
 Let us introduce some notation first. We denote $d$ the number of derivatives used in the method \eqref{eq:general_LMM_root-finder}. Higher derivatives of the inverse are found by iteratively applying the inverse function derivative rule. Methods defined by \eqref{eq:LMM_root-finder} are the special case of \eqref{eq:general_LMM_root-finder} with $d=1$. We also introduce coefficients $\sigma_k$ that indicate whether the coefficients $a_k^{(n)}$ are arbitrarily fixed from the outset or left free to maximise the order of convergence, i.e., $\sigma_k = 1$ if $a_k^{(n)} $ is free and $\sigma_k = 0$ otherwise.

\begin{theorem}\label{thm:convergence_bound}
	For simple roots, the convergence rate $p$ for any method of the form \eqref{eq:general_LMM_root-finder}, where the coefficients are chosen so as to give the highest order of convergence, is given by the largest real root of
	\begin{equation}\label{eq:convergence_root}
	p^s = \sum_{k=0}^{s-1} p^k(d + \sigma_k),
	\end{equation}
	for all $s \geq 1$ and $d \geq 1$, or $s \geq 2$ and $d=0$. The convergence rate is bounded by $p < d+2$.\\
	Additionally, if $\sigma_k = 1$ for all $k = 0,\ldots,s-1$, the convergence rates form a monotonically increasing sequence in $s$ and $p \to d+2$ as $s \to \infty$.
\end{theorem}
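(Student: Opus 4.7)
The plan is to interpret (6) as an inverse polynomial interpolation of $\phi := f^{-1}$ at the nodes $y_{n+k} = f(x_{n+k})$, derive an asymptotic recurrence for the errors $e_{n+k} := x_{n+k} - x^{*}$ of the form $e_{n+s} \sim C \prod_k e_{n+k}^{d+\sigma_k}$, and then extract (7) by substituting the standard self-similar ansatz. The barrier and the monotonicity/limit statements will then follow from elementary analysis of the resulting polynomial.

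First I would substitute $x_{n+k} = \phi(y_{n+k})$ and $F^{(j)}(x_{n+k}) = \phi^{(j+1)}(y_{n+k})$ into (6) and Taylor-expand about the root. At node $k$ the available free parameters are the $d$ coefficients $b_k^{(n)}, c_k^{(n)}, \ldots$ together with $a_k^{(n)}$ when $\sigma_k = 1$, giving $d + \sigma_k$ matching conditions. Choosing the coefficients to maximise the order exhausts all of these, so the leading truncation error is proportional to $\prod_{k=0}^{s-1}(0 - y_{n+k})^{d+\sigma_k}$. Using $y_{n+k} = f'(x^{*})\,e_{n+k} + O(e_{n+k}^2)$ for a simple root yields
\begin{equation*}
e_{n+s} \sim C \prod_{k=0}^{s-1} e_{n+k}^{d+\sigma_k}.
\end{equation*}
Inserting $e_{n+k} \sim A\, e_n^{p^{k}}$ and matching exponents of $e_n$ produces (7). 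The polynomial $p^s - \sum_{k=0}^{s-1} p^k(d+\sigma_k)$ has exactly one sign change in its coefficient sequence, so by Descartes' rule of signs it has a unique positive real root, which is the convergence rate.

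For the barrier, bounding $d + \sigma_k \le d+1$ in (7) and summing the geometric series gives
\begin{equation*}
p^s \le (d+1)\,\frac{p^s - 1}{p - 1},
\end{equation*}
which rearranges to $p \le d + 2 - (d+1)/p^s < d+2$. When $\sigma_k \equiv 1$, equation (7) reduces to $F_s(p) := p^{s+1} - (d+2)p^s + (d+1) = 0$, and the identity $F_{s+1}(p) = p\,F_s(p) - (d+1)(p-1)$ gives $F_{s+1}(p_s) = -(d+1)(p_s - 1) < 0$. Since $F_{s+1}(d+2) = d+1 > 0$, the largest real root $p_{s+1}$ of $F_{s+1}$ lies strictly above $p_s$. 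The sequence $\{p_s\}$ is therefore monotone increasing and bounded above by $d+2$; passing to the limit in the rearrangement $p_s - (d+2) = -(d+1)/p_s^{s}$ of $F_s(p_s)=0$, and noting $p_s^s \to \infty$, forces $p_s \to d+2$.

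The step I expect to be most delicate is the derivation of the error recurrence. When $\sigma_k = 0$, the fact that $a_k^{(n)}$ is arbitrarily fixed rather than tuned to cancel a Taylor moment has to be shown to suppress exactly one factor of $(0 - y_{n+k})$ from the leading error, without contaminating the factors attached to the other nodes. I would handle this node by node via a divided-difference representation of the LMM truncation error, so that the dependence on each $e_{n+k}$ factorises cleanly into the asserted product.
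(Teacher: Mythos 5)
Your derivation of the error recurrence and of the barrier $p < d+2$ mirrors the paper's proof: both read \eqref{eq:general_LMM_root-finder} as inverse Hermite interpolation of $f^{-1}$, identify the leading error with the product $\prod_{k}(0-y_{n+k})^{d+\sigma_k}$, linearise $y_{n+k}\approx f^\prime(\alpha)\,\varepsilon_{n+k}$ at a simple root, insert the power-law ansatz, and then use the same geometric-series bound $p^s \le (d+1)(p^s-1)/(p-1)$. Where you genuinely diverge is in the monotonicity and the limit. The paper rewrites the $\sigma_k\equiv 1$ case as $d+2-p=(d+1)/p^s$ and argues geometrically that the intersection of a slope $-1$ line with a convex, decaying power law is unique and moves to the right as $s$ grows, then invokes the Monotone Convergence Theorem — which, strictly speaking, only yields convergence to \emph{some} limit at most $d+2$, not the value $d+2$ itself. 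Your algebraic identity $F_{s+1}(p)=p\,F_s(p)-(d+1)(p-1)$, giving $F_{s+1}(p_s)<0<F_{s+1}(d+2)$, is a cleaner sign-change proof of $p_{s+1}>p_s$, and your final step — passing to the limit in $p_s-(d+2)=-(d+1)/p_s^{\,s}$ and using $p_s^{\,s}\to\infty$ — actually closes the gap the paper leaves open by pinning the limit at $d+2$. Your Descartes'-rule observation likewise supplies uniqueness of the positive root of \eqref{eq:convergence_root} for general $\sigma_k$, which the paper only establishes for $\sigma_k\equiv 1$. The one delicacy you flag but do not resolve — that a node with $\sigma_k=0$ makes the interpolation of Hermite--Birkhoff type (derivatives matched without the function value), so the textbook Hermite error formula does not apply verbatim — is equally unaddressed in the paper's proof; carrying out your proposed node-by-node divided-difference argument would strengthen both.
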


\begin{proof}
	1. Any method of the form \eqref{eq:general_LMM_root-finder} implicitly uses inverse polynomial (Hermite) interpolation applied to the inverse function $f^{-1}$, let us call the resulting interpolation $H$. Let $y_{n+k}$, $k = 0,\ldots,s-1$ be the interpolation points. At each point $y_{n+k}$, there are $d+\sigma_k$ values are interpolated, the inverse function value $x_k$ if $\sigma_k = 1$ and $d$ derivative values. Thus, the polynomial interpolation error formula gives
	\begin{equation*}
	f^{-1}(y) - H(y) = \frac{[f^{-1}]^{(N+1)}(\upsilon)}{(N+1)!} \prod_{k=0}^{s-1} (y - y_{n+k})^{d+\sigma_k},
	\end{equation*}
	where $\upsilon$ is in the interval spanned by the interpolation points and $N = sd + \sum_{k=0}^{s-1}\sigma_k$. The approximation to the root is then computed as $x_{n+s} = H(0)$. Let us denote the exact value of the root as $\alpha$, then
	\begin{equation*}
	|x_{n+s} - \alpha | =  \frac{|[f^{-1}]^{(N+1)}(\upsilon)|}{(N+1)!} \prod_{k=0}^{s-1} | y_{n+k}|^{d+\sigma_k}.
	\end{equation*}
	Define $\varepsilon_{n+k} =  x_{n+k} - \alpha $ and recognise that $f(x_{n+k}) = f(\alpha + \varepsilon_{n+k}) = f^\prime(\alpha) \varepsilon_{n+k} + \mathcal{O}(\varepsilon_{n+k}^2)$, where $f^\prime(\alpha) \neq 0$. Thus, we find
	\begin{equation*}
	| \varepsilon_{n+s}| \approx A_0 |\varepsilon_{n+s-1}|^{d+\sigma_{s-1}} \cdots | \varepsilon_{n}|^{d+\sigma_{0}},
	\end{equation*}
	where $A_0>0$ is a constant depending on $[f^{-1}]^{(N+1)}(\upsilon) $, $s$ and $f^\prime(\alpha)$. The error behaviour is of the form
	\begin{equation*}
	|\varepsilon_{l+1}| = C |\varepsilon_l|^p, \tag{$\ast$}
	\end{equation*}
	asymptotically as $l \to \infty$. Here, $C>0$ is a constant. Applying $(\ast)$ $s$ times on the left and $s-1$ times on the right-hand side leads to
	\begin{equation*}
	| \varepsilon_{n}|^{p^s} \approx A_1 | \varepsilon_{n}|^{\sum_{k=0}^{s-1} p^k(d+\sigma_k) },
	\end{equation*}
	where all the constants have been absorbed into $A_1$. Thus, \eqref{eq:convergence_root} is established.\par
	2. For methods that only use a single point, we have $s=1$ and $\sigma_0 = 1$ so that \eqref{eq:convergence_root} simplifies to $p = d + 1$. Hence, also in the case $s=1$, we have $p < d+2$.\par
	3. Finally, by its definition we can bound $\sigma_k \leq 1$, so that we obtain
	\begin{equation*}
	p^s \leq  (d+1) \sum_{k=0}^{s-1} p^k = (d+1) \frac{p^s-1}{p-1}.
	\end{equation*}
	Simplifying, we obtain
	\begin{equation*}
	p^{s+1}-(d+2)p^s + d+1 \leq 0. \tag{$\star$}
	\end{equation*}
	Note that $ p=1 $ is always a solution if we impose equality. However, the maximal convergence rate is given by the largest real root, so that we look for solutions $p>1$. Dividing by $p^s$ yields,
	\begin{equation*}
	p -( d+2) \leq - \frac{d+1}{p^s} <0,
	\end{equation*}
	which holds for all $s \geq 1$. Hence, we obtain $ p < d+2$.\par
	4. Suppose now that $\sigma_k = 1$ for all $k = 0,\ldots,s-1$, so that the convergence rate satisfies $(\star)$ with equality. Rewriting, we obtain
	\begin{equation*}
	d+2 - p = \frac{d+1}{p^s}.
	\end{equation*}
	Thus, the convergence rate is given by the intersection point of a straight line and an inverse power law, both as a function of $p$. First, we observe that there is always an intersection at $p=1$. Furthermore, the slope of the straight line is $-1$, while the slope of the inverse power law at $p=1$ is given by $-s(d+1)$. Therefore, the slope of the right-hand side is smaller than the slope of the left-hand side for all $s\geq 1$ and $d\geq 1$, or $d=0$ and $s\geq2$. Thus, to the right of $p=1$, the inverse power law is below the straight line. Combined with the fact that $ \frac{d+1}{p^s}$ is a convex function for $p>0$ and it goes asymptotically to zero, we see that the second intersection point exists and is unique. Fix $s$ and call the second intersection point, i.e., the convergence rate, $p_s$. Finally, we note that for $p>1$, we have
	\begin{equation*}
	\frac{d+1}{p^{s+1}} < \frac{d+1}{p^s},
	\end{equation*}
	so that $p_{s+1}$ will be moved towards the right compared to $p_s$. Thus, we find
	\begin{equation*}
	p_{s+1} > p_s
	\end{equation*}
	for all $d \geq 1$ and $s \geq 1$, or $d=0$ and $s \geq 2$. The result now follows from the Monotone Convergence Theorem \cite{adams}.
\end{proof}

From Theorem~\ref{thm:convergence_bound}, we find several special cases, such as the derivative-free interpolation root-finders, i.e., $d=0$. Note that derivative-free root-finders with $s=1$ simply do not exist, as then the inverse is approximated with a constant function.

\begin{corollary}
	Inverse polynomial interpolation root-finders, i.e., $d=0$ resulting in all $b_k^{(n)}=0$ in \eqref{eq:LMM_root-finder}, can attain at most a convergence rate that is quadratic. Their convergence rates are given by the largest real root of
	\begin{equation}\label{eq:inverse_polynomial_root}
	p^{s+1}-2p^s+1 =0,
	\end{equation}
	for all $s\geq 2$. The convergence rates are bounded by $p < 2$ and form a monotonically increasing sequence in $s$, with $p \to 2$ as $s \to \infty$.
\end{corollary}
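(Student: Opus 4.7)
The plan is to obtain this corollary directly as a specialization of Theorem~\ref{thm:convergence_bound} to the derivative-free setting, requiring no new estimates. First, I would observe that when $d=0$ the only data an inverse Hermite interpolant can use at a node $y_{n+k}$ is the inverse function value $x_{n+k}$ itself; if $\sigma_k=0$ too, the node would carry no information and the interpolation problem would become degenerate. Hence for a genuine inverse polynomial interpolation method every $\sigma_k$ must equal $1$, and the case $s=1$ is excluded since it would correspond to a constant interpolant with no root estimate. This leaves exactly the regime $s\geq 2$ of the corollary.

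With $d=0$ and $\sigma_k=1$ for all $k=0,\ldots,s-1$, equation~\eqref{eq:convergence_root} collapses to a geometric sum,
\begin{equation*}
p^s \;=\; \sum_{k=0}^{s-1} p^k \;=\; \frac{p^s-1}{p-1}.
\end{equation*}
Multiplying through by $p-1$ and rearranging yields \eqref{eq:inverse_polynomial_root} immediately. The bound $p<2$ is then the specialization of the universal barrier $p<d+2$ already proved in Theorem~\ref{thm:convergence_bound}.

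For the monotonicity and the limit $p\to 2$, I would appeal to the final clause of Theorem~\ref{thm:convergence_bound}, which applies whenever $\sigma_k=1$ for all $k$ and either $d\geq 1$, or $d=0$ and $s\geq 2$; the second case is exactly ours. That clause gives $p_{s+1}>p_s$ for all admissible $s$, together with convergence of $p_s$ to $d+2=2$. There is no genuine obstacle in the argument: the corollary is essentially a bookkeeping exercise unpacking Theorem~\ref{thm:convergence_bound} under a single parameter specialization, and the only slightly delicate point is the preliminary observation that $\sigma_k\equiv 1$ and $s\geq 2$ are forced once the derivative-free assumption $d=0$ is imposed.
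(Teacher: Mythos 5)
Your proposal is correct and follows essentially the same route as the paper: specialize Theorem~\ref{thm:convergence_bound} with $d=0$ and $\sigma_k=1$ for all $k$, reduce \eqref{eq:convergence_root} to the geometric sum $p^s=(p^s-1)/(p-1)$ to obtain \eqref{eq:inverse_polynomial_root}, and invoke the theorem's final clause for the bound, monotonicity, and the limit $p\to 2$. Your extra remark that $\sigma_k\equiv 1$ and $s\geq 2$ are forced in the derivative-free case is a small, correct elaboration of what the paper states more tersely.
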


\begin{proof}
	The coefficients $\{a_k^{(n)}\}_{k=0}^{s-1}$ are chosen to maximise the order of convergence, so that $\sigma_k = 1$ for all $k=0,\ldots,s-1$, while $d=0$, leading to
	\begin{equation*}
	p^s = \sum_{k=0}^{s-1} p^k = \frac{p^s-1}{p-1}.
	\end{equation*}
	Simplifying yields \eqref{eq:inverse_polynomial_root}. Furthermore, the condition that $\sigma_k = 1$ for all $k=0,\ldots,s-1$ is satisfied so that the convergence rates form a monotonically increasing sequence that converges to $d+2 = 2$.
\end{proof}

Inverse polynomial root-finders such as the secant method ($s=2$) or inverse quadratic interpolation ($s=3$) are derivative-free, so that their highest convergence rate is $2$ according to Theorem \ref{thm:convergence_bound}. The first few convergence rates for derivative-free inverse polynomial interpolation methods are presented in Table~\ref{tab:convergence_rates_interpolation}. The well known convergence rates for the secant method and the inverse quadratic interpolation method are indeed reproduced. As becomes clear from the table, the rates quickly approach $2$ but never quite get there. The increase in convergence rate becomes smaller and smaller as we increase the number of interpolation points.

\begin{table}[h]
	\centering
	\caption{The first few convergence rates for $s$ points using only function values.}
	\label{tab:convergence_rates_interpolation}
	\begin{tabular}{l|l}
		$s$   & $p$      \\ \hline
		$2$ & $1.62$    \\
		$3$ & $1.84$ \\
		$4$ & $1.92$ \\
		$5$ & $1.97$
	\end{tabular}
\end{table}

Next, we cover the Adams-Bashforth methods also discussed in \cite{grau_adams}. As ODE solvers, Adams-Bashforth methods are explicit integration methods that have order of accuracy $s$ \cite{hairer}. However, as Theorem~\ref{thm:convergence_bound} suggests, as root-finders they will have a convergence rate that is smaller than cubic, since $d=1$. In fact,  the convergence rate of Adams-Bashforth root-finders is bounded by $\frac{3+\sqrt{5}}{2} = 2.62$ as was proven by Grau-S\'anchez et al. \cite{grau_adams}. The following corollary is a generalisation of their result.

\begin{corollary}
	The Adams-Bashforth root-finder methods with $s\geq 2$ exhibit convergence rates given by the largest real root of
	\begin{equation}\label{eq:AB_root}
	p^{s+1} - 3 p^s + p^{s-1} + 1 =0,
	\end{equation}
	for all $s \geq 1$. The convergence rates are bounded by $p < \frac{3 + \sqrt{5}}{2}$ and form a monotonically increasing sequence in $s$, with $p \to  \frac{3 + \sqrt{5}}{2}$ as $s \to \infty$.
\end{corollary}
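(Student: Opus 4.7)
The plan is to obtain the Adams-Bashforth convergence equation \eqref{eq:AB_root} as the special case of Theorem~\ref{thm:convergence_bound} with $d=1$ and a specific choice of the $\sigma_k$, and then to analyse that algebraic equation along the lines of parts~3 and~4 of the proof of Theorem~\ref{thm:convergence_bound}. First I would identify the Hermite interpolation data used by an $s$-step Adams-Bashforth method. Written in the form \eqref{eq:LMM_root-finder}, it has $a_{s-1}^{(n)} = -1$, $a_k^{(n)} = 0$ for $k < s-1$, and every $b_k^{(n)}$ generally nonzero, so the only inverse-function value appearing is $x_{n+s-1}$ while a derivative value $F(x_{n+k})$ is used at all $s$ nodes. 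In the notation of Theorem~\ref{thm:convergence_bound} this corresponds to $d=1$, $\sigma_{s-1}=1$ and $\sigma_k=0$ for $k \le s-2$. Substituting into \eqref{eq:convergence_root} gives
\begin{equation*}
p^s \;=\; \sum_{k=0}^{s-2} p^k \;+\; 2p^{s-1} \;=\; \frac{p^{s-1}-1}{p-1} + 2p^{s-1},
\end{equation*}
and clearing denominators yields exactly \eqref{eq:AB_root}.

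The key step for the upper bound is to divide \eqref{eq:AB_root} by $p^{s-1}$, which rewrites it as $p^2 - 3p + 1 = -p^{-(s-1)}$. Because the right-hand side is strictly negative for any root $p_s > 0$, one must have $p_s^2 - 3p_s + 1 < 0$, which confines $p_s$ strictly between the two roots of the fixed quadratic $p^2 - 3p + 1$, whose larger root is $\tfrac{3+\sqrt{5}}{2}$. This immediately gives the barrier $p_s < \tfrac{3+\sqrt{5}}{2}$.

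For the monotonicity I would mimic part~4 of the proof of Theorem~\ref{thm:convergence_bound}. Define $G_s(p) := p^2 - 3p + 1 + p^{-(s-1)}$, so that $p_s$ is its largest real root. A direct computation yields
\begin{equation*}
G_{s+1}(p_s) - G_s(p_s) \;=\; p_s^{-s} - p_s^{-(s-1)} \;=\; \frac{1-p_s}{p_s^{s}} \;<\; 0
\end{equation*}
for $p_s > 1$. Since $G_{s+1}$ is strictly positive just beyond $\tfrac{3+\sqrt{5}}{2}$ (it differs from the parabola only by a small positive term), the intermediate value theorem places a root of $G_{s+1}$ in $(p_s, \tfrac{3+\sqrt{5}}{2})$, and every root of $G_{s+1}$ larger than $1$ lies in this interval by the bound argument above. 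Hence $p_{s+1} > p_s$, and the Monotone Convergence Theorem produces a limit $p^* \le \tfrac{3+\sqrt{5}}{2}$. Passing to the limit in $p_s^2 - 3p_s + 1 = -p_s^{-(s-1)}$ with $p_s > 1$ forces $(p^*)^2 - 3p^* + 1 = 0$, so $p^* = \tfrac{3+\sqrt{5}}{2}$.

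The main obstacle I anticipate is the very first step: unambiguously pinning down the $\sigma_k$, since the text's informal description of $\sigma_k$ as ``free versus fixed'' must really be interpreted operationally as ``the inverse-function value $x_{n+k}$ is or is not among the Hermite interpolation data''. Once $d=1$ together with $\sigma_{s-1}=1$ and $\sigma_k=0$ for $k<s-1$ are established, the derivation of \eqref{eq:AB_root} is a one-line calculation and its subsequent analysis is an almost verbatim adaptation of the intersection-of-curves argument already carried out in the proof of Theorem~\ref{thm:convergence_bound}, with the fixed quadratic $p^2-3p+1$ taking the role previously played by the line $d+2-p$.
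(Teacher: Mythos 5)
Your proposal is correct and follows essentially the same route as the paper: identify $d=1$, $\sigma_{s-1}=1$, $\sigma_k=0$ otherwise, substitute into \eqref{eq:convergence_root} to get \eqref{eq:AB_root}, divide by $p^{s-1}$ to obtain $p^2-3p+1<0$ and hence the barrier, and then show the relevant auxiliary term decreases in $s$ for $p>1$ so the largest root increases and converges to $\tfrac{3+\sqrt{5}}{2}$. Your IVT formulation of the monotonicity step is just a repackaging of the paper's intersection-of-curves argument, and your limit identification is if anything slightly more explicit than the paper's appeal to the Monotone Convergence Theorem.
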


\begin{proof}
	1. Adams-Bashforth methods have $a_k^{(n)}=0$ for $k=0,\ldots,s-2$, resulting in $\sigma_k = 0$ for $k = 0,\ldots,s-2$ and $\sigma_{s-1} = 1$. We may write $\sigma_k$ for simplicity as a Kronecker delta, i.e., $\sigma_k = \delta_{k,s-1}$. Furthermore, the methods use a single derivative of $f^{-1}$ so that $d=1$. The $s=1$ method is equal to Newton's method, which has a quadratic convergence rate. For $s \geq 2$, we find from Theorem~\ref{thm:convergence_bound} that
	\begin{equation*}
	p^s =p^{s-1} + \sum_{k=0}^{s-1} p^k = p^{s-1} + \frac{p^{s}-1}{p-1} .
	\end{equation*}
	Simplifying yields \eqref{eq:AB_root}. Again, we assume that $p>1$ and we divide by $p^{s-1}$, so that
	\begin{equation*}
	p^2 - 3p + 1 =- \frac{1}{p^{s-1}} < 0,
	\end{equation*}
	which holds for all $s \geq 2$. This implies that $p < \frac{3 + \sqrt{5}}{2}$ for all $s \geq 2$.\par
	2. The proof that the convergence rates make up a monotonically increasing sequence is similar to the one given for Theorem~\ref{thm:convergence_bound}. First, we note that \eqref{eq:AB_root} always has a root at $p=1$. Next, we rewrite it to read
	\begin{equation*}
	3-p = \frac{1}{p} + \frac{1}{p^s}.
	\end{equation*}
	The left-hand side is a straight line with slope $-1$ while the right-hand side is a convex function that has slope $-3$ at $p=1$. Thus, to the right of $p=1$, the convex function is below the straight line. Furthermore, the right-hand side goes asymptotically to zero for large $p$. This implies that there is a unique intersection point with $p>1$, which is the convergence rate. For fixed $s$, call the second intersection point $p_s$. Finally, we note that
	\begin{equation}
	\frac{1}{p} + \frac{1}{p^{s+1}} < \frac{1}{p} + \frac{1}{p^s},
	\end{equation}
	for $p>1$, from which we see that the intersection point is moved to the right for larger $s$, i.e., $p_{s+1}>p_s$.  The result again follows from the Monotone Convergence Theorem.
\end{proof}

The first few convergence rates for the Adams-Bashforth root-finder methods are given in Table~\ref{tab:convergence_rates_AB} and agree with the rates found by Grau-S\'anchez et al. As becomes clear from the table, the convergence rates quickly draw near the bound of $2.62$. Yet again we are met with steeply diminishing returns as we increase the number of history points $s$.

\begin{table}[h]
	\centering
	\caption{The first few convergence rates for Adams-Bashforth root-finder method using $s$ points.}
	\label{tab:convergence_rates_AB}
	\begin{tabular}{l|l}
		$s$   & $p$      \\ \hline
		$1$ & $2$ \\
		$2$ & $2.41$    \\
		$3$ & $2.55$ \\
		$4$ & $2.59$ \\
		$5$ & $2.61$
	\end{tabular}
\end{table}

The Adams-Bashforth root-finder methods cannot attain a convergence rate higher than $2.62$, which is still some way off the cubic bound given by Theorem~\ref{thm:convergence_bound}. However, the theorem also tells us that using full linear multistep methods will result in convergence rates closer to cubic, at least in the limit of large $s$. For ODE solvers, trying to obtain a higher convergence rate by increasing the number of points often leads to instabilities. Indeed, the stability regions of many LMMs tend to shrink as the order is increased \cite{butcher}. In general, polynomial interpolation on equispaced points is unstable, e.g., Runge's phenomenon \cite{quarteroni}.\par
We must note that the stability issues in ODE solvers arise from the fact that polynomial interpolation is applied on an equispaced grid. Root-finders are designed to home in on a root, and when convergent, the step sizes will decrease rapidly. Runge's phenomenon can be countered by placing more nodes closer to the boundary, for example Gau{\ss} or Lebesgue nodes. As we will see, inverse polynomial interpolation is stable on the set of points generated by the root-finder itself, provided the starting guess is sufficiently close.\par
Let us inspect the convergence rates of different LMM-based root-finders using Theorem~\ref{thm:convergence_bound}, see Table~\ref{tab:convergence_rates_derivatives}. These convergence rates are computed under the assumption that all derivatives and point values are used, i.e., $\sigma_k = 1$ for $k=0,\ldots,s-1$ in Theorem~\ref{thm:convergence_bound}. The convergence rate of a $d$-derivative method can be boosted by at most $1$, and the table shows that this mark is attained very quickly indeed. Adding a few history points raises the convergence rate significantly, but finding schemes with $s>3$ is likely to be a waste of time.

\begin{table}[h]
	\centering
	\caption{The first few convergence rates for $s$ points (vertical) using function values and the first $d$ derivatives (horizontal).}
	\label{tab:convergence_rates_derivatives}
	\begin{tabular}{l|llll}
		$s\backslash d$ & $1$    & $2$    & $3$    & $4$    \\ \hline
		$1$   & $2$    & $3$    & $4$    & $5$    \\
		$2$   & $2.73$ & $3.79$ & $4.82$ & $5.85$ \\
		$3$   & $2.91$ & $3.95$ & $4.97$ & $5.98$ \\
		$4$   & $2.97$ & $3.99$ & $4.99$ & $5.996$
	\end{tabular}
\end{table}

Thus, provided that the root-finders are stable, a higher convergence rate can be achieved by adding history points, as well as derivative information.

\section{Full LMM-based root-finders}\label{sec:new_finders}
Let us investigate full LMM-based root-finders that use a single derivative, thus methods of the form \eqref{eq:LMM_root-finder}. The current step size is then given by $h_{n+s} = -f(x_{n+s-1})$. Let us define $q_k^{(n)}$ as
\begin{equation}
q_k^{(n)} = \frac{f(x_{n+k})}{f(x_{n+s-1})}, \quad k = 0,\ldots ,s-2,
\end{equation}
so that  $h_{n+s} q_k^{(n)} = -f(x_{n+k})$ is the total step between $y_{n+k}$ and $y_{n+s}=0$. The Taylor expansions of $x(y_{n+k})$ and $x^\prime(y_{n+k})$ about $y_{n+s}$ are then given by
\begin{subequations}
	\begin{align}
	x(y_{n+k}) = x(y_{n+s}) +\sum_{m=1}^\infty \frac{1}{m!} (-h_{n+s} q_k)^m x^{(m)} (y_{n+s}),\\
	x^\prime(y_{n+k}) = x^\prime(y_{n+s}) + \sum_{m=1}^\infty \frac{1}{m!} (-h_{n+s} q_k)^m x^{(m+1)} (y_{n+s}),
	\end{align}
\end{subequations}
where we have dropped the superscript $(n)$ for brevity. Substituting these into \eqref{eq:LMM_root-finder}, we obtain
\begin{equation}
\begin{aligned}
&x(y_{n+s}) \left[1 + \sum_{k=0}^{s-1} a_k \right]- h_{n+s} x^\prime(y_{n+s}) \left[ \sum_{k=0}^{s-1} a_k q_k + b_k \right]\\
&+ \sum_{m=2}^{\infty} \frac{1}{(m-1)!}  (-h_{n+s})^m x^{(m)}(y_{n+s}) \sum_{k=0}^{s-1} \left[  \tfrac{1}{m} q_k^m a_k  +  q_k^{m-1} b_k  \right]=0.
\end{aligned}
\end{equation}
The consistency conditions then are given by
\begin{subequations}\label{eq:LMM_consistency}
	\begin{align}
	&\sum_{k=0}^{s-1} a_k= -1,\\
	&\sum_{k=0}^{s-1} a_k q_k + b_k = 0.
	\end{align}
\end{subequations}
This gives us two equations for $2s$ coefficients, so that we can eliminate another $2s-2$ leading order terms, resulting in the conditions
\begin{equation}\label{eq:LMM_order_equations}
\sum_{k=0}^{s-1} \frac{q^m_k}{m} a_k + q_k^{m-1} b_k = 0,
\end{equation}
where $m = 2,\ldots,2s-1$.

\subsection{The $s=2$ method}
\noindent The $s=2$ LMM-based method is given by
\begin{equation}\label{eq:inverse_cubic_LMM}
x_{n+2} + a_1 x_{n+1} + a_0 x_n = h_{n+2} \Big( b_1 F(x_{n+1}) + b_0 F(x_{n})  \Big),
\end{equation}
where we have again suppressed the superscript $(n)$ on the coefficients. Here, $h_{n+2} = - f(x_{n+1})$ so that we may write $q = q_0$, i.e.
\begin{equation}
q = \frac{f(x_n)}{f(x_{n+1})}.
\end{equation}
Applying \eqref{eq:LMM_consistency} and \eqref{eq:LMM_order_equations}, we find a set of linear equations, i.e.,
\begin{subequations}
\begin{align}
 a_1 + a_0 = -1,\\
 a_1 + q a_0 +b_1 + b_0 = 0,\\
\tfrac{1}{2}a_1 + \tfrac{1}{2} q^2  a_0 +b_1 + q b_0 = 0,\\
\tfrac{1}{3} a_1 + \tfrac{1}{3} q^3 a_0 + b_1 + q^2 b_0 = 0.
\end{align}
\end{subequations}
These equations may be solved, provided $q \neq 1$, to yield
\begin{subequations}\label{eq:LMM_coefficients}
	\begin{align}
 a_0 &= \frac{1-3q}{(q-1)^3} & 	a_1 &= -1-a_0, \\
b_0 &= \frac{q}{(q-1)^2} &	b_1 &= q b_0 .
	\end{align}
\end{subequations}
The condition $q\neq 1$ is equivalent to $f(x_{n+1}) \neq f(x_n)$. This condition is not very restrictive, as stronger conditions are needed to ensure convergence.\par
The above method may also be derived from the inverse polynomial interpolation perspective, using the ansatz
\begin{equation}\label{eq:inverse_cubic_interpolation}
H(y) = h_3 \big(y-f(x_{n+1}) \big)^3 + h_2 \big(y-f(x_{n+1}) \big)^2 + h_1 \big(y-f(x_{n+1}) \big) + h_0,
\end{equation}
where $h_i$, $i=0,1,2,3$ are undetermined coefficients. The coefficients are fixed by demanding that $H$ interpolates $f^{-1}$ and its derivative at $y=f(x_{n+1})$ and $y=f(x_n)$, i.e.,
\begin{subequations}\label{eq:inverse_cubic_conditions}
\begin{align}
H\big(f(x_n)\big) &= x_n, \\
H\big(f(x_{n+1})\big) &= x_{n+1},\\
H^\prime \big(f(x_n)\big) &= \frac{1}{f^\prime(x_n)},\\
H^\prime \big(f(x_{n+1})\big) &= \frac{1}{f^\prime(x_{n+1})}.
\end{align}
\end{subequations}
Solving for $h_i$, $i=0,1,2,3$ and setting $y=0$, we find the same update $x_{n+2}$ as \eqref{eq:inverse_cubic_LMM}.\par
The stability of the $s=2$ LMM method depends on the coefficients of the LMM in much the same way as an ODE solver. Indeed, we can set the sequence $\tilde{x}_n = x_n + z_n$ where $x_n$ is the sequence generated by exact arithmetic we wish to find while $z_n$ is a parasitic mode. It can be shown that the parasitic mode satisfies
\begin{equation}
z_{n+2} + a_1 z_{n+1} + a_0 z_n = 0,
\end{equation}
so that it may grow unbounded if the roots are greater than $1$ in modulus. Using the ansatz $z_n = B \lambda^n$, we find the characteristic polynomial of the $s=2$ method, i.e.,
\begin{equation}
\rho(\lambda) = \lambda^2 - \lambda \left( 1+a_0  \right) + a_0 = (\lambda -1) \left(\lambda - a_0\right),
\end{equation}
where the roots cans simply be read off. Stability of the root-finder is ensured if the stability polynomial of the method has a single root with $\lambda = 1$, while the other roots satisfy $|\lambda| <1$. This property is called zero-stability for linear multistep ODE solvers. Thus, to suppress parasitic modes we need
\begin{equation}
|a_0| = \left| \frac{1-3q}{(q-1)^3} \right| <1.
\end{equation}
This reduces to $q$ being either $q<0$, or $q>3$, so that $|q|>3$ is a sufficient condition. Thus, if the sequence $\{ |f(x_n)| \}_{n=1}^\infty$ is decreasing fast enough, any parasitic mode is suppressed. We may estimate $q$ as a ratio of errors, since $f(x_n) = f^\prime( \alpha) \varepsilon_n + \mathcal{O}(\varepsilon_n^2)$, so that
\begin{equation}
q \approx \frac{\varepsilon_n}{\varepsilon_{n+1}}.
\end{equation}
Using $\varepsilon_{n+1} = C \varepsilon_n^p$ with $p=1+\sqrt{3}$, we find that
\begin{equation}
\varepsilon_n < C_1,
\end{equation}
with $C_1 = \left( \frac{1}{3 C} \right)^\frac{1}{\sqrt{3}} $. We conclude that the method will be stable if the initial errors are smaller than the constant $C_1$, which depends on the details of the function $f$ in the vicinity of the root. This condition translates to having the starting values sufficiently close to the root. This is a rather typical stability condition for root-finders.

\subsection{$s=3$ method}
We may again apply \eqref{eq:LMM_consistency} - \eqref{eq:LMM_order_equations} to find a method with $s=3$, this time we have $6$ coefficients, given by
\begin{subequations}
	\begin{align}
	a_0 &= \frac{q_1^2( q_0 (3 + 3q_1 - 5 q_0) -q_1  )}{(q_0-1)^3(q_0-q_1)^3}, & b_0 &= \frac{q_0 q_1^2}{(q_0-1)^2(q_0-q_1)^2}, \\
	a_1 &=  \frac{q_0^2( q_1 (5 q_1- 3q_0 - 3 ) + q_0  )}{(q_1-1)^3(q_0-q_1)^3},  &  b_1 &= \frac{q_0^2 q_1}{(q_0-q_1)^2 (q_1 - 1)^2}, \\
	a_2 &= \frac{q_0^2 q_1^2( 3q_1 - q_0(q_1-3)-5 )}{(q_0-1)^3 (q_1-1^3) } , & b_2 &= \frac{q_0^2 q_1^2}{(q_0-1)^2(q_1-1)^2},
	\end{align}
\end{subequations}
where $q_0 = \frac{f(x_n)}{f (x_{n+2}) }$ and $q_1 = \frac{f(x_{n+1})}{f (x_{n+2}) }$. Here, we have the conditions $q_0 \neq 1$ and $q_1 \neq 1$, reducing to the condition that all $y$-values must be unique. Again, this condition is not very restrictive for reasons detailed above.\par
Methods with a greater number of history points are possible, however, the gain in convergence rate from $s=3$ to $s=4$ is rather slim, as indicated by Table~\ref{tab:convergence_rates_derivatives}. If such methods are desirable, they can be derived by selecting coefficients that satisfy \eqref{eq:LMM_consistency} - \eqref{eq:LMM_order_equations}.

\section{Results}\label{sec:results}
Like the secant method, the $s=2$ full LMM root-finding method needs two starting points for the iteration. However, as the analytical derivative is available, we choose to simply start Newton's method with one point, say $x_0$, giving $x_1$. The $s=3$ method needs three starting values, therefore the next value $x_2$ is obtained from the $s=2$ LMM method. The LMM-based methods can be efficiently implemented by storing the function value and derivative value of the previous step, thus resulting in a need for only one function and one derivative evaluation per iteration.\par
The efficiency measure is defined as $p^{\frac{1}{w}}$ with $p$ the order of convergence and $w$ the number of evaluations per iteration \cite{gautschi}. Assuming the function itself and the derivative cost the same to evaluate, the $s=3$ LMM-based method has an efficiency measure of $\sqrt{2.91} \approx 1.71$. Compared to Newton's method, with an efficiency measure of $\sqrt{2} \approx 1.41$,  this is certainly an improvement. 

\subsection{Numerical examples}
Here, we provide a number of test cases and show how many iterations LMM-based root-finders take versus the number needed by Newton's method, see Table~\ref{tab:numerical_examples_newton_hybrid}. We have used a selection of different test cases with polynomials, exponentials, trigonometric functions, square roots and combinations thereof. For each of the test problems shown, the methods converged within a few iterations. Some problems were deliberately started near a maximum or minimum to see the behaviour when the derivatives are small.\par
The test computations were performed using the variable-precision arithmetic of MATLAB's Symbolic Math Toolbox. The number of digits was set to 300 while the convergence criterion used was
\begin{equation}
|x_{l+1} - x_l| \leq 10^{-\eta},
\end{equation}
with $\eta = 250$. The numerical convergence rates were computed with the error behaviour
\begin{equation}\label{eq:error_behavior}
|\varepsilon_{l+1}| = C | \varepsilon_{l}|^p,
\end{equation}
asymptotically as $l \to \infty$. The limiting value of the estimates for $p$ is displayed in Table~\ref{tab:numerical_examples_newton_hybrid}.

\begin{table}[h]
\centering
\caption{Test cases with iterations taken for Newton's method (subscript $N$) and the LMM-based method with $s=2$ (subscript $2$) and $s=3$ method (subscript $3$). }
\label{tab:numerical_examples_newton_hybrid}
\begin{tabular}{lrl|lllll}
	function & root & $x_0$ & $\#$its$_N$ & $\#$its$_{2}$ & $\#$its$_{3}$ &  $p_{2}$ & $p_{3}$\\ \hline
	$x + e^x$  & $-0.57$ & $1.50$ & 11 & 8 & 8 &  $2.73$ & $2.93$ \\
	$\sqrt{x} -\cos(x)$ & $0.64$ & $0.50$ & 9 & 7 & 8 &  $2.74$ & $2.91$ \\
	$e^x -x^2+3x-2$ & $0.26$ & $0.00$ & 10 & 8 & 7 &  $2.72$ & $2.94$ \\
	$x^4-3x^2-3$ & $1.95$ & $1.30$ & 17 & 14 & 14 &  $2.73$ & $2.92$ \\
	$x^3-x-1$ & $1.32$ & $1.00$ & 12 & 9 & 9 &  $2.73$ & $2.64$ \\
	$e^{-x}-x^3$ & $0.77$ & $2.00$ & 13 & 10 & 10 & $2.73$ & $2.92$ \\
	$5\big(\sin(x)+\cos(x)\big)-x$ & $2.06$ & $1.50$ & 11 & 9 & 9 &  $2.73$ & $2.92$ \\
	$x-\cos(x)$ & $0.74$ & $1.00$ & 9 & 7 & 7 &  $2.72$ & $2.93$ \\
	$\log(x-1)+\cos(x-1)$ & $1.40$ & $1.60$ & 12 & 9 & 9 &  $2.73$ & $2.92$ \\
	$\sqrt{1+x} -x$ & $1.62$ & $1.00$ & 9 & 7 & 7 &  $2.73$ & $2.92$ \\
	$\sqrt{e^x-x} - 2x$ & $0.54$ & $1.00$ & 11 & 8 & 7 &  $2.73$ & $2.92$  \\ \hline \hline
\multicolumn{2}{l}{Total number of iterations} & & $124$ & $96$ & $95$ & &   \\
\end{tabular}
\end{table}

Overall, Newton's method consistently displayed a quadratic convergence rate, which is the reason we did not display it in the table. The LMM-based methods, on the other hand, generally have a higher convergence rate that may vary somewhat from problem to problem. This is due to the fact that the step sizes may vary slightly while the convergence rates of the LMM-based methods only holds asymptotically, even with so many digits.

\subsection{Pathological functions}
A classical example of a pathological function for Newton's method is the hyperbolic tangent $\tanh(x)$. Students often believe that Newton's method converges for any monotone function until they are asked to find the root of $\tanh(x)$. Hence, we have used this function as a test case using standard double precision floating point arithmetic and a convergence criterion reading
\begin{equation}\label{eq:double_precision_criterion}
|x_{l+1} - x_l| \leq 2 \epsilon,
\end{equation}
with $\epsilon$ the machine precision. Newton's method fails to converge for starting values with approximately $|x_0|\geq1.089$, see Table~\ref{tab:tanh_test}. Our $s=2$ LMM-based method extends this range somewhat more and converges for any starting value with roughly $|x_0| \leq 1.239$. The behaviour of the $s=3$ LMM-based method is similar, though it does take two more iterations to converge.

\begin{table}[]
	\centering
	\caption{Convergence history for $\tanh(x)$ of the three methods: Newton (subscript $N$) and the LMM-based methods with $s=2$ and $s=3$. Note that the root of $\tanh(x)$ is at $x=0$.}
	\label{tab:tanh_test}
	\begin{tabular}{r|r|r}
		$x_N$              & $x_{(s=2)}$              & $x_{(s=3)}$              \\ \hline
		$1.239$ & $1.239$ & $1.239$ \\
		$-1.719$ & $-1.719$ & $-1.719$ \\
		$6.059$ & $0.8045$ & $0.8045$ \\
		$-4.583 \cdot 10^{4}$ & $0.7925$ & $-0.6806$ \\
		$\mathrm{Inf}$ & $-0.7386$ & $1.377$ \\
		 & -$6.783 \cdot 10^{-3}$ & $-0.7730$ \\
		 & $9.323 \cdot 10^{-6}$ & $3.466 \cdot 10^{-2}$ \\
		 & $|x_{(s=2)}|< \epsilon$ & $-3.032 \cdot 10^{-4}$ \\
		 & & $1.831 \cdot 10^{-11}$ \\
		 &  & $|x_{(s=3)}|<\epsilon$ \\         
	\end{tabular}
\end{table}

Starting at $x_0 = 1.239$, Newton's method diverges quickly, returned as $-\mathrm{inf}$ after only 4 iterations. The LMM-based method, on the other hand, bounces around positive and negative values for about 5 iterations until it is close enough to the root. After that, the asymptotic convergence rate sets in and the root is quickly found, reaching the root within machine precision at 7 iterations.\par
Donovan et al.\cite{donovan_miller_moreland} developed another test function for which Newton's method fails, it gives a false convergence result to be precise. The test function is given by
\begin{equation}\label{eq:DMM_test_function}
h(x) = \sqrt[3]{x}e^{-x^2},
\end{equation}
which is, in fact, infinitely steep near the root $x=0$, yet smooth, see Figure~\ref{fig:DMM_test}. Again, we used double precision arithmetic and \eqref{eq:double_precision_criterion} as a stopping criterion. Newton's method diverges for any starting value except the exact root itself. However, Newton's method eventually gives a false convergence result as the increment $|x_{l+1}-x_l|$ falls below the tolerance. The $s=2$ and $s=3$ LMM-based methods converge when starting with $|x_0| \leq 0.1147$ for this problem, see Table~\ref{tab:DMM_test}.

\begin{figure}[h]
\centering
\includegraphics[width=\textwidth]{./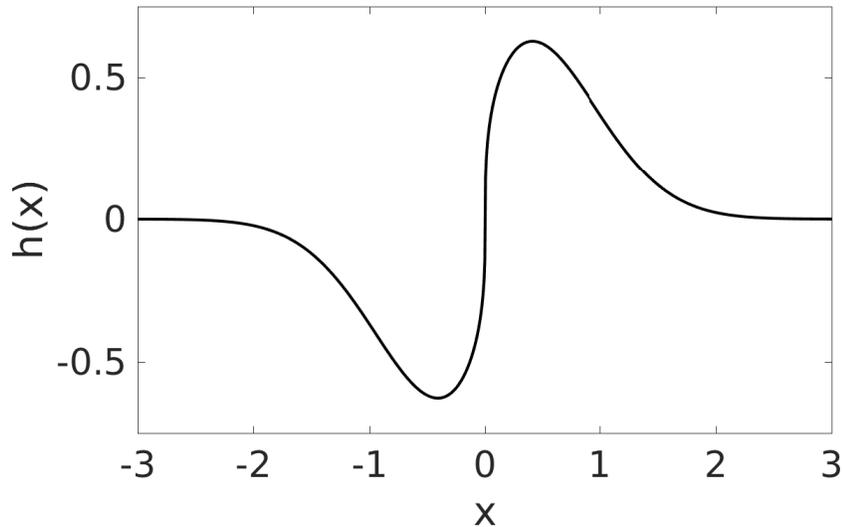}
\caption{Pathological test function $h(x)$ from \eqref{eq:DMM_test_function}.}
\label{fig:DMM_test}
\end{figure}

\begin{table}
	\centering
	\caption{Convergence history for $h(x)$ from \eqref{eq:DMM_test_function} of the three methods: Newton (subscript $N$) and the LMM-based methods with $s=2$ and $s=3$. Note that the root is at $x=0$.}
	\label{tab:DMM_test}
	\begin{tabular}{r|r|r}
			$x_N$              & $x_{(s=2)}$              & $x_{(s=3)}$              \\ \hline
			$0.1147$ & $0.1147$ 									& $0.1147$ \\
			$0.2589$ & $-0.2589$ 									& $-0.2589$ \\
			$1.0402$ & $0.1016$ 									& $0.1016$ \\
			$1.6084$ & $9.993 \cdot 10^{-2}$ 		  & $-5.648 \cdot 10^{-2}$ \\
			$1.9407$ & $-0.2581$ 									& $0.1959$ \\
			$2.2102$ & $9.840 \cdot 10^{-2}$ 		  & $-0.1611$ \\
			$2.4445$ & $9.810 \cdot 10^{-2}$  		  & $5.021 \cdot 10^{-2}$ \\
			$2.6549$ & $-0.2344$ 									& $-7.190 \cdot 10^{-2}$ \\
			$2.8478$ & $6.602 \cdot 10^{-2}$ 		  & $4.947 \cdot 10^{-2}$ \\
			$3.0270$ & $6.021 \cdot 10^{-2}$ 		  & $-3.777 \cdot 10^{-3}$ \\
			$3.1953$ & $-4.939 \cdot 10^{-2}$ 		  & $3.027 \cdot 10^{-4}$ \\
			$3.3543$ & $-4.019 \cdot 10^{-4}$ 		  & $-6.875 \cdot 10^{-6}$ \\
			$3.5056$ & $1.288 \cdot 10^{-4}$ 		  & $1.216 \cdot 10^{-9}$ \\
			$3.6502$ & $2.028 \cdot 10^{-10}$ 		& $-4.652 \cdot 10^{-15}$ \\
			$3.7889$ & $-5.308 \cdot 10^{-15}$ 		& $|x_{(s=2)}|< \epsilon$ \\
			$3.9225$ & $|x_{(s=2)}|< \epsilon$ &  \\
		\end{tabular}
\end{table}

Starting at the maximal $x_0=0.1147$ for instance, the LMM-based methods bounce several times between positive and negative $x$-values without making much headway. After that, the root is close enough and the asymptotic convergence rate sets in, reaching the root to within machine precision in a few steps.\par
We believe the reason that the LMM-based method has increased stability is due to the fact that it uses two points to evaluate the function and its derivative. In both cases, the iterations jump between positive and negative values, enclosing the root. In this fashion, the LMM-based method acts much like the \textit{regula falsi} method. Once the iterates are close enough to the root, the asymptotic convergence rate sets in and the iterates converge in but a few steps.

\section{A robust implementation}\label{sec:robust}
As with most open root-finding algorithms, the conditions under which the method is guaranteed to converge are rather restrictive. Therefore, we have designed a bracketing version of the LMM-based method that is guaranteed to converge. The algorithm is based on Brent's method, using similar conditions to catch either slow convergence or runaway divergence. This version of the LMM-based method does, however, require an enclosing bracket $[a,b]$ on which the function changes sign, i.e., $f(a) f(b)<0$. Alternatively, such a method can start out as an open method, switching to the bracketing method when a sign change is detected.\par
The algorithm consists of a cascade of methods increasing in accuracy but decreasing in robustness, similar to Brent's method. At the lowest level stands the most robust method, bisection, guarding against steps outside the current search bracket. Bisection is guaranteed to converge, but does so rather slowly. On the highest level we use the full $s=3$ LMM-based method discussed in the previous section. Thus, in the best possible case, the method will converge with a rate of $2.91$. The method is, by virtue of the bisection method, guaranteed to converge to a root.\par
Like Brent's method and Dekker's method, the LMM-based method keeps track of three points $a$, $b$ and $c$. Here, $b$ is the best estimate of the root so far, $c$ is the previous value for $b$ while $a$ is the contrapoint so that $\mathrm{int}[a,b]$ encloses the root. Ideally, all three values are used to compute the next value for $b$. However, extra conditions are added to ensure the inverse actually makes sense on the interval $\mathrm{int}[a,c]$.\par
Consider the case where the sign of $f^\prime(c)$ is not equal to the sign of $\frac{f(b)-f(a)}{b-a}$, but the sign of $f^\prime(b)$ is. It follows that there is an extremum between $b$ and $c$, and the inverse function does not exist in the interval $\mathrm{int}[a,c]$, leading to an error if we were to compute the inverse interpolation.\par 
Thus, the following condition should be applied to each derivative value: the sign of the derivative needs to be the same as the sign of the secant slope on $\mathrm{int}[a,b]$, i.e.,
\begin{equation}\label{eq:derivative_condition}
\mathrm{sgn }\left( f^\prime(z) \right) = \mathrm{sgn } \left( \frac{f(b)-f(a)}{b-a} \right),
\end{equation}
where $z=a,b,c$. Only when the derivative at a point satisfies \eqref{eq:derivative_condition} can the derivative sensibly contribute to the inverse. Otherwise the derivative information should be discarded, leading to lower-order interpolation. If all derivatives are discarded, the resulting interpolation is inverse quadratic or the secant method.\par
Ultimately, the method provides an interval on which the function $f$ changes sign with a relative size of some given tolerance $\delta$, i.e.,
\begin{equation}\label{eq:brent_criterion}
|a - b| \leq \delta |b|.
\end{equation}
We shall use $\delta = 2 \epsilon$ in all our examples, with $\epsilon$ the machine precision.  As an input, the algorithm has $f$, $f^\prime$, $a$ and $b$ such that $f(a)f(b)<0$. The algorithm can be described in the following way:

\begin{enumerate}
\item If all three function values are different, use $s=3$, otherwise use $s=2$.

\item Check the sign of the derivatives at points $a$, $b$ and $c$, include the derivatives that have the proper sign.

\item If the interpolation step is worse than a bisection step, or outside the interval $[a,b]$, use bisection.

\item If the step is smaller than the tolerance, use the tolerance as step size.

\item If the convergence criterion is met, exit, otherwise go to 1.

\end{enumerate}

The first step determines the number of history points that can be used. The second step determines which derivative values should be taken into account. In effect, only the second step is essentially different from Brent's method, with all the following steps exactly the same \cite{brent}. The extra conditions on the derivatives gives rise to a selection of 12 possible root-finders, including inverse quadratic interpolation and the secant method. Our method can therefore be seen as building another 10 options on top of Brent's method. Naturally, sufficiently close to the root, the derivative conditions will be satisfied at all three points and the method will use the full LMM method with $s=3$.

\subsection{Comparison with Brent's method}
Here we give a few examples of the robust LMM-based root-finding algorithm discussed above compared to Brent's method. As a performance measure, we use the number of iterations. Standard double precision arithmetic is employed, as that provides sufficient material for comparison. For both methods, the stopping criterion is given by \eqref{eq:brent_criterion}, the relative size of the interval must be sufficiently small.

\begin{table}[h]
	\centering
	\caption{Test cases with iterations taken for Brent's method and the LMM-based method. Subscript $B$ represents Brent while subscript $LMM$ represents the LMM-based method.}
	\label{tab:examples_brent_hybrid}
	\begin{tabular}{lrl|ll}
function    				&		root      & $[a,b]$		& $\#$its$_B$ & $\#$its$_{LMM}$  \\ \hline
$x + e^x$  				  	&$-0.57$&  $[-1,1]$		& $6$    				 & $4$    			  \\
$\sqrt{x} -\cos(x)$&$0.64$& $[0,2]$      & $8$	          	 		& $4$			 \\
$e^x -x^2+3x-2$		&$0.26$& $[-1,1]$		& $5$					& $3$			\\
$x^4-3x^2-3$		  &	$1.95$	& $[1,3]$	   		& $10$		  		& $8$		 \\
$x^3-x-1$				  &	$1.32$	& $[0,2]$		   & $29$			    & $6$			\\
$e^{-x}-x^3$				  &	$0.77$  & $[0,2]$	  		& $9$				  & $4$				 \\
$5\big(\sin(x)+\cos(x)\big)-x$&$2.06$ &$[0,4]$&$45$	  & $6$ \\
$x-\cos(x)$				& $0.74$	& $[0,1]$		  & $7$				  & $3$				 \\
$\log(x-1)+\cos(x-1)$&$1.39$&$[1.2,1.6]$& $31$		& $4$			 \\
$\sqrt{1+x} -x$		&		$1.62$ & $[0,2]$		& $5$				& $3$				\\
$\sqrt{e^x-x} - 2x$&$0.54$& $[-1,2]$		 & $9$				 & $4$				\\ \hline \hline
Total number of iterations & & & $164$ & $49$ \\
Total number of function evaluations &	&&		$164$  & $98$
	\end{tabular}
\end{table}

Table~\ref{tab:examples_brent_hybrid} shows that for most functions, both Brent's method and the LMM-based method take a comparable number of iterations. However, in some cases, the difference is considerable. In the worst case considered here, Brent's method takes $7.5$ times as many iterations to converge. In terms of efficiency index, Brent's method should be superior with an efficiency index of $1.84$ against $1.71$ of the LMM-based method. Taken over the whole set of test functions, however, Brent's method takes more than three times as many iterations in total, leading to a significant increase in function evaluations. We conclude therefore that practically, the LMM-based root-finder is a better choice.

\section{Conclusions}\label{sec:conclusions}
We have discussed root-finders based on full linear multistep methods. Such LMM-based methods may be interpreted as inverse polynomial (Hermite) interpolation methods, resulting in a simple and general convergence analysis. Furthermore, we have proven a fundamental barrier for LMM-based root-finders: their convergence rate cannot exceed $d+2$, where $d$ is the number of derivatives used in the method.\par
The results indicate that compared to the Adams-Bashforth root-finder methods of Grau-S\'anchez et al. \cite{grau_adams}, any full LMM-based method with $s \geq 2$ has a higher convergence rate. As ODE solvers, full LMMs are typically not zero-stable and special choices of the coefficients have to be made. Employed as root-finders on the other hand, it turns out that LMMs are stable, due to the rapid decrease of the step size. This allows the usage of full LMMs that are otherwise not zero-stable.\par
Contrary to the Adams-type methods, the full LMM-based root-finders can achieve the convergence rate of $3$ in the limit that all history points are used. The $s=2$ and $s=3$ methods, $s$ being the number of history points, were explicitly constructed and provide a convergence rate of $2.73$ and $2.91$, respectively. Numerical experiments confirm these predicted convergence rates. Furthermore, application to pathological functions where Newton's method diverges show that the LMM-based methods also have enhanced stability properties.\par
Finally, we have implemented a robust LMM-based method that is guaranteed to converge when provided with an enclosing bracket of the root. The resulting robust LMM root-finder algorithm is a cascade of twelve root-finders increasing in convergence rate but decreasing in reliability. At the base sits bisection, so that the method is indeed guaranteed to converge to the root. At the top resides the $s=3$ LMM-based root-finder, providing a maximal convergence rate of $2.91$.\par
In terms of efficiency index, Brent's method is theoretically the preferred choice with $1.84$ compared to $1.71$ for the LMM-based method. However, numerical examples show that the increased convergence rate leads to a significant decrease in the total number of function evaluations over a range of test functions. Therefore, in practical situations, provided the derivative is available, the LMM-based method performs better.

\section*{Acknowledgements}
This work was generously supported by Philips Lighting and the Intelligent Lighting Institute.


\end{document}